\begin{document}
\numberwithin{equation}{section}
\newtheorem{teo}{Theorem}[section]
\newtheorem{lem}[teo]{Lemma}
\newtheorem{pro}[teo]{Proposition}
\newtheorem{defi}[teo]{Definition}
\newtheorem{cor}[teo]{Corollary}
\newtheorem{oss}[teo]{Remark}
\newtheorem{exa}[teo]{Example}
\def\a{\alpha}
\def\b{\beta}
\def\g{\gamma}
\def\l{\lambda}
\def\d{\delta}
\def\e{\epsilon}
\def\S{{\mathbb{S}}}
\def\N{{\mathbb{N}}}
\def\G{{\Gamma}}
\def\Z{{\Sigma}}
\def\O{{\Omega}}
\def\ds{\displaystyle}
\newcommand{\R}{\mathbb{R}}
\def\dim{{\bf Proof. }}
\def \fine{\hfill {\rule {2mm} {2mm}} \vspace{3mm}}

%fabio's definitions
\newcommand{\rn}{\mathbb{R}^{n}}
\newcommand{\C}{\mathbb{C}}
\newcommand{\rl}{\mathbb{R}^{l}}
\newcommand{\Ker}{{\rm Ker}}
\renewcommand{\Im}{{\rm Im}}
\renewcommand{\Re}{{\rm Re}}
\newcommand{\xp}{x^{\prime}}
\newcommand{\xpp}{x^{\prime\prime}}
\newcommand{\xppp}{x^{\prime\prime\prime}}
\newcommand{\xip}{{\xi^{\prime}}}
\newcommand{\xipp}{\xi^{\prime\prime}}
\newcommand{\xippp}{\xi^{\prime\prime\prime}}
\newcommand{\bp}{\beta^{\prime}}
\newcommand{\bpp}{\beta^{\prime\prime}}
\newcommand{\ol}{\overline{L}}
\newcommand{\oo}{\overline{\omega}}
\newcommand{\zp}{z^\prime}
\newcommand{\zpp}{z^{\prime\prime}}
\newcommand{\zppp}{z^{\prime\prime\prime}}
\newcommand{\tp}{t^\prime}
\newcommand{\tpp}{t^{\prime\prime}}
\newcommand{\tppp}{t^{\prime\prime\prime}}
\newcommand{\ozp}{\overline{z}^\prime}
\newcommand{\ozpp}{\overline{z}^{\prime\prime}}
\newcommand{\ozppp}{\overline{z}^{\prime\prime\prime}}
\newcommand{\xl}{x^\lambda}
\newcommand{\xil}{\xi^\lambda}
\newcommand{\cl}{\chi_\lambda}
\newcommand{\pl}{p_\lambda}
\newcommand{\al}{a_\lambda}
\renewcommand{\l}{\lambda}

\title[Restriction conjecture]{
Slicing surfaces and Fourier
restriction conjecture}
\author{Fabio Nicola}
\address{Dipartimento di Matematica, Politecnico di Torino,
Corso Duca degli Abruzzi,
 24 - 10129 Torino, Italy}
 \email{fabio.nicola@polito.it
}
\thanks{}
\begin{abstract}
We deal with the restriction
phenomenon for the Fourier
transform. We prove that each
of the restriction
conjectures for the sphere,
the paraboloid, the elliptic
hyperboloid in $\R^n$ implies
that for the cone in
$\R^{n+1}$. We also prove a
new restriction estimate for
any surface in $\R^3$ locally
isometric to the plane and of
finite type.
\end{abstract} {}\smallskip
%--------------------------------------
\subjclass[2000]{42B10}
\keywords{Restriction
theorem, Fourier transform,
conic sections} \maketitle

\section{Introduction and discussion of
 the results}\label{intro}
 Let $S$
be a smooth hypersurface with
(possibly empty) boundary in
$\R^n$, $n\geq 2$, or a
compact subset (with
non-empty interior) of such a
hypersurface, and let
$d\sigma$ be the surface
measure on $S$. Denote by
$\hat{f}$ the Fourier
transform of the function
$f$. We deal with the
so-called restriction
estimate
\[
\|\hat{f}|_S\|_{L^q(S,d\sigma)}\leq
C_{p,q,S}\|f\|_{L^p(\R^n)},
\]
for all Schwartz functions
$f$, or equivalently with the
extension estimate
$$
\|(ud\sigma)^\vee\|_{L^{p'}(\R^n)}\leq
C_{p,q,S}\|u\|_{L^{q'}(S,d\sigma)},
$$ for all smooth functions
$u$ with compact support in
$S$. We denote these
estimates by $R_S(p\to q)$
and $R^\ast_S(q'\to p')$
respectively. Here
$(ud\sigma)^\vee$ is the
inverse Fourier transform of
the measure $ud\sigma$.\\ We
will mostly be interested in
the case in which $S$ is the
sphere, or the elliptic
paraboloid, or the elliptic
hyperboloid in $\R^n$ and
also the light cone in
$\R^{n+1}$. Precisely we
define
\[S_{sphere}=\{\xi\in\R^n:
|\xi|=1\},\quad
S_{parab}=\{(\xi',\xi_n)\in\R^n:\xi_n=
\frac{1}{2}|\xi'|^2\},
\]
\[
S_{hyperb}=\{(\xi',\xi_n)\in\R^n:\xi_n=
\sqrt{1+|\xi'|^2}\},\]
\[
S_{cone}=\{(\xi,\tau)\in\R^{n+1}:\tau=|\xi|,\
\xi\not=0\}.\] On the sphere
we take the usual surface
measure, whereas the measure
on the paraboloid and on the
hyperboloid are defined as
the pull-back under the
projection
$(\xi',\xi_n)\mapsto\xi'$ of
the measures $d\xi'$ and
$\frac{d\xi'}{\sqrt{1+|\xi'|^2}}$
respectively. Also, on the
cone we consider the Lorentz
invariant measure
$d\sigma_{cone}$ defined as
the pull-back under the
projection
$(\xi,\tau)\mapsto\xi$ of the
measure $\frac{d\xi}{|\xi|}$.
Of course for the cone we
mean that, in the estimates
$R_S(p\to q)$ and
$R^\ast(q'\to p')$, $n$ must
be replaced by $n+1$.\par The
restriction estimate is
conjectured to hold in the
following cases (see
\cite{tomas,strichartz,stein,wolff2}
and especially \cite{tao}).
\par\medskip\noindent
{\bf Restriction
conjecture}.\par
\begin{itemize}{\it
\item[{\it i})] Suppose that
\begin{equation}\label{f01}
\frac{p'}{n+1}\geq\frac{q}{n-1},\quad
p'>\frac{2n}{n-1}.
\end{equation}
Then $R_S(p\to q)$ holds when
$S$ is any compact subset of
a hypersurface in $\R^n$ with
non-vanishing Gaussian
curvature or of the cone in
$\R^{n+1}$.\item[{\it ii})]
Suppose that
\begin{equation}\label{f02}
\frac{p'}{n+1}=\frac{q}{n-1},\quad
p'>\frac{2n}{n-1},
\end{equation}
i.e., the pair $p,q$ is scale
invariant. Then $R_S(p\to q)$
holds when $S$ the whole
paraboloid in $\R^n$, or the
whole hyperboloid in $\R^n$,
or the whole cone in
$\R^{n+1}$.} \end{itemize}
The conditions \eqref{f01}
and \eqref{f02} are known to
be necessary.\par This
fascinating conjecture was
proved for curves in the
plane by Zygmund
\cite{zigmund} and Fefferman
\cite{fefferman}, for the
cone in $\R^3$ by Barcel\'o
\cite{barcelo} and for the
cone in $\R^4$ by Wolff
\cite{wolff}. Partial results
in higher dimensions have
been obtained by
 many authors, culminating in the work of
 Wolff \cite{wolff}, who
 proved {\it i}) for the cone under
 the additional condition
$p'>\frac{2(n+3)}{n+1}$, and
Tao \cite{tao0}, who proved
{\it i}) for the sphere and
the paraboloid under the
additional condition
$p'>\frac{2(n+2)}{n}$.
\par
For the paraboloid any scale
invariant result for compact
subsets extends automatically
to the whole paraboloid,
whereas, for the cone, this
is not so immediate. The
above mentioned results by
Wolff should however extend
(in the scale invariant case)
to the whole cone (possibly
after conceding an epsilon in
the exponents) by using the
techniques in
\cite{tao01,tao02}
(\cite{taop}). Moreover, it
is well known (see  e.g.
\cite{tao02} and Problem 1.1
of \cite{tao}) that the
restriction conjecture for
the sphere (or any other
hypersurface with $n$
positive principal
curvatures) implies the
restriction conjecture for
the paraboloid. See
\cite{tao} for a detailed
survey.
\par As one sees, the numerology for the
sphere, the paraboloid and the
hyperboloid in $\R^n$ agrees with that
of the cone in $\R^{n+1}$.
Heuristically this is explained by
observing that the cone has one extra
dimension which nevertheless, being
flat, is not expected to produce any
contribution in the restriction
estimate. A deeper investigation of
this link, in the case of the
paraboloid and the cone, has been also
exploited in \cite{tao03} (Proposition
17.5; see also the discussion in
\cite{tao}, pages 7-8). However, to our
knowledge, proofs of formal
implications did not appear in the
literature and do not seem known.
\par The first result of this note shows
that each of the restriction
conjectures for the sphere,
the paraboloid, and the
hyperboloid in $\R^n$ implies
the restriction conjecture
for the cone in $\R^{n+1}$.
More precisely, we have the
following result.
\begin{teo}\label{t1} a)
Assume, {\rm (}for some $p,q$
satisfying \eqref{f01}{\rm )}, one of
the following hypotheses:
\begin{itemize}
\item[{\it i})] $R_{S}(p\to
q)$ holds for the sphere $S$
in $\R^n$;\item[{\it ii})]
$R_{S}(p\to q)$ holds for
every compact subset $S$ of
the paraboloid in
$\R^n$;\item[{\it iii})]
$R_{S}(p\to q)$ holds for
every compact
 subset $S$ of the hyperboloid in
 $\R^n$.
\end{itemize}
Then $R_S(\ p\to q)$ holds for every
compact subset $S$ of the cone in
$\R^{n+1}$.\par\medskip\noindent b)
Assume one of the following
hypotheses:\par
\begin{itemize}
\item[{\it i})'] $R_{S}(p\to
q)$ holds, for some $p,q$ satisfying
\eqref{f02}, for the sphere $S$ in
$\R^n$;
\item[{\it ii})'] $R_{S}(p\to
q)$ holds, for some $p,q$ satisfying
\eqref{f02}, for the whole paraboloid
$S$ in $\R^n$;\par \item[{\it iii})']
$R_{S}(p\to q)$, for some $p,q$
satisfying \eqref{f02}, holds for the
whole hyperboloid $S$ in
 $\R^n$.
\end{itemize}
Then $R_S(p\to q)$ holds for
the whole cone $S$ in
$\R^{n+1}$.
\end{teo}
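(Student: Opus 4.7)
The plan is to foliate the cone in $\R^{n+1}$ by a one-parameter family of affine slices, each of which is a rescaled copy of the assumed hypersurface, and then to exploit the Fourier structure in the transverse direction. For case ii) I would slice by the hyperplanes $\{\tau+\xi_n=s\}$, $s>0$, whose intersection with $\{\tau=|\xi|\}$ is the paraboloid $\xi_n=(s^2-|\xi'|^2)/(2s)$, $\xi'\in\R^{n-1}$. For case i) I would use the horizontal slices $\{\tau=s\}$, each a sphere of radius $s$; for case iii) the slices $\{\tau-a\xi_n=s\}$ with fixed $|a|>1$, each a hyperboloid. In every case this yields a parametrization $(s,\xi')\in(0,\infty)\times\R^{n-1}$ of the cone in which the Lorentz-invariant measure takes the separated form $ds\,d\xi'/s$ (up to a bounded factor). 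After a suitable linear change of variables on the physical side $(x,t)\leftrightarrow(x',\mu,\nu)$, the phase $x\cdot\xi+t\tau$ decomposes---in the paraboloid case---as $s\nu/2+x'\cdot\xi'+\mu|\xi'|^2/(2s)$, with analogous formulas in the other two cases.

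Consequently the extension operator factors as
\[
(u\,d\sigma_{\rm cone})^\vee(x,t)=\int_0^\infty e^{is\nu/2}\,(T_s u_s)\!\bigl(x',\alpha(s,\mu)\bigr)\,\frac{ds}{s},
\]
where $u_s(\xi'):=u(s,\xi')$ and $T_s$ is the extension operator for the slice surface, a dilate of the standard one. The central step is to apply the Hausdorff--Young inequality in the variable $\nu$ (legitimate since $p'\ge2$, as implied by \eqref{f01}) followed by Minkowski's integral inequality, and then, after absorbing the $s$-dependence of $T_s$ via a rescaling in $\xi'$, to arrive at
\[
\|(u\,d\sigma_{\rm cone})^\vee\|_{L^{p'}(\R^{n+1})}^p\le C\int_0^\infty \|T_s u_s\|_{L^{p'}(\R^n)}^p\,\frac{ds}{s}.
\]
The assumed restriction estimate $\|T_s u_s\|_{L^{p'}}\le C\|u_s\|_{L^{q'}}$ then turns the right-hand side into $C\int_0^\infty \|u_s\|_{L^{q'}}^p\,ds/s$; a book-keeping of exponents confirms that this bound has the same scaling as $\|u\|_{L^{q'}({\rm cone})}^p$ under the dilation symmetry of the cone.

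For part a) the support of $u$ is compact in the cone, so $s$ lies in a compact subinterval of $(0,\infty)$. Since \eqref{f01} forces $p\le q'$, H\"older's inequality in $s$ promotes the above bound to $C\|u\|_{L^{q'}({\rm cone})}^p$, completing the proof. For part b) the exponents are scale invariant (so the estimate itself is scale invariant), and the single-scale bound from part a) applies uniformly to each dyadic piece $u_k$ of a decomposition $u=\sum_k u_k$ supported where $s\in[2^k,2^{k+1}]$. Since $(u_k\,d\sigma_{\rm cone})^\vee$ is Fourier-localized in $\nu$ to $[2^k,2^{k+1}]$, the estimate on the whole cone is assembled by the Littlewood--Paley square-function inequality in $\nu$ followed by an $\ell^2$-Minkowski step (valid for $p'\ge2$). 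The main obstacle I foresee is precisely this last vector-valued step: it closes directly when $q\ge2$, whereas for $q<2$ (a range that is compatible with \eqref{f02} only for small $n$) one would have to replace the standard $\ell^2$-square function by a Rubio de Francia-type $\ell^{q'}$ variant.
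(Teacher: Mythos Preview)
Your foliation of the cone by conic sections and the use of Hausdorff--Young in the transversal variable are exactly the skeleton of the paper's proof, and your argument for part~a) is essentially the paper's (the paper uses the Lorentz refinement $\|\hat f\|_{L^{p'}}\lesssim\|f\|_{L^{p,p'}}$ and a mixed-norm interchange lemma, but on a compact $s$-interval this gains nothing over your plain Hausdorff--Young plus Minkowski plus H\"older).

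The real divergence is in part~b). Your dyadic/Littlewood--Paley route leads, as you yourself note, to an $\ell^2$ sum $\bigl(\sum_k\|u_k\|_{L^{q'}}^2\bigr)^{1/2}$, and this dominates $\|u\|_{L^{q'}}=\bigl(\sum_k\|u_k\|_{L^{q'}}^{q'}\bigr)^{1/q'}$ only when $q'\le 2$. The range $q<2$ is \emph{not} exotic: under the scale-invariant relation $p'/(n+1)=q/(n-1)$ one has $q<2$ precisely when $p'<2(n+1)/(n-1)$, which for every $n\ge2$ covers a genuine portion of the conjectured range (for $n=2$ it is the interval $4<p'<6$). The suggested Rubio de Francia fix does not help: Rubio de Francia controls the square function by the function, not the function by an $\ell^{q'}$ combination of the pieces with $q'>2$, so the inequality points the wrong way. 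As stated, your scheme does not close for these exponents.

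The paper avoids the dyadic step altogether. After the transversal Hausdorff--Young it keeps the sharper Lorentz norm $L^{p,p'}_s$ (rather than $L^p_s$), interchanges it with $L^{p'}_{x}$ via a short real-interpolation lemma, and then---because the scale-invariant relation forces $p'>q'$---embeds $L^{p,p'}_s\hookrightarrow L^{p,q'}_s$. At this point the $s$-integral is handled by Lorentz--H\"older against the power weight $s^{(n-2)/q-n/p'}$, which lies in $L^{\alpha,\infty}(\R^+)$ with $1/\alpha=1/p-1/q'$ \emph{exactly} when $p'/(n+1)=q/(n-1)$. This one-shot weighted estimate replaces your square-function summation and works for every admissible $q$, with no loss at the endpoint and no case distinction. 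If you want to repair your argument, the missing idea is this Lorentz-space bookkeeping rather than any vector-valued Littlewood--Paley variant.
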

Notice that, even in the case
of the whole cone, there is
no loss in the exponents.
Also, by combining Theorem
\ref{t1} with the sharp
restriction theorem for the
circle in the plane, we
obtain another proof of the
sharp restriction theorem for
the cone in $\R^3$.
\par As might be expected, the proof
exploits the fact that the
sphere, the paraboloid and
the hyperboloid are sections
of the light cone. We observe
that a similar trick goes
back to H\"ormander's paper
\cite{hormander}, and has
been used, in different
contexts, by Vega
\cite{vega}, Carbery
\cite{carbery}, Mockenhaupt,
Seeger and Sogge
\cite{mockenhaupt}, Tataru
\cite{tataru} (Appendix B),
 Tao \cite{tao05} (Proposition 4.1), Burq,
  Lebeau
and Planchon \cite{burq}
(Theorem 2). Also, this trick
works for more general
surfaces of revolution than
the cone, e.g. the
paraboloid, but one
 no longer obtains sharp
estimates.\par The second
result of this note consists
in a restriction theorem for
surfaces in $\R^3$ with
Gaussian curvature vanishing
everywhere.\par Here we say
that a point $P_0$ of a
hypersurface $S$ is of type
$k$ if $k$ is the order of
contact of $S_0$ with its
tangent plane at $P_0$
(\footnote{In general, if $S$
is a smooth $m$-dimensional
submanifold of $\R^n$, $1\leq
m\leq n-1$, and
$\phi:U\subset \R^m\to\R^n$
is a local parametrisation of
$S$, with $\phi(x_0)=P_0$,
$P_0$ is called of type $k$
if $k$ is the smallest
integer such that, for each
unit vector $\eta$, there
exists an $\alpha$ with
$|\alpha|\leq k$ for which
$\partial^\alpha
[\phi(x)\cdot\eta]|_{x=x_0}\not=0$.
See \cite{stein}, page 350
for more details.}).
\begin{teo}\label{t2}
Let $S$ be a surface in
$\R^3$ and $P_0\in S$ a point
of type $k$. Suppose that $S$
has Gaussian curvature
vanishing identically near
$P_0$. Then there exists a
compact neighbourhood
$S_0\subset S$ of $P_0$ and a
constant $C$ such that
\begin{equation}\label{pm}
\|(ud\sigma)^\vee\|_{L^{p'}(\R^3)}\leq
C\|u\|_{L^{q'}(S,d\sigma)},
\end{equation}
for every $p'>4$, $p'\geq
k+2$, $p'\geq (k+1)q$, and
every smooth $u$ supported in
$S_0$.
\end{teo}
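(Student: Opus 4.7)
The plan is to reduce the extension estimate on the developable surface $S$ to a one-parameter family of extension estimates for finite-type curves in the plane, by taking a partial Fourier transform in the direction of the ruling at $P_0$.

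Since the Gaussian curvature of $S$ vanishes identically near $P_0$, the surface is locally developable and admits a ruled parametrization $\phi(x,y)=\gamma(x)+y\mathbf{v}(x)$, $|\mathbf{v}|\equiv 1$, with the flatness condition $\det(\gamma',\mathbf{v},\mathbf{v}')\equiv 0$. After an isometry of $\R^3$ (which preserves both sides of \eqref{pm}) I put $P_0=0$, arrange the tangent plane at $P_0$ to be the $(z_1,z_2)$-plane, and take $\mathbf{v}(0)=e_1$; a harmless reparametrization in $x$ then yields $\gamma'(0)=e_2$. Shrinking $S_0$, I may assume $v_1\ge 1/2$, so the substitution $(x,y)\mapsto(x,x_1)$ with $x_1:=\phi_1(x,y)$ is a local diffeomorphism. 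Writing $\Psi(x,x_1):=\phi(x,y(x,x_1))$, one has $\Psi_1\equiv x_1$, and
\[
(ud\sigma)^{\vee}(z_1,z_2,z_3)=\int_{\R}e^{iz_1 x_1}\,G(x_1;z_2,z_3)\,dx_1,
\]
where for each fixed $x_1$ the function $G(x_1;\cdot,\cdot)$ is the planar extension operator applied to the curve $\Gamma_{x_1}:x\mapsto(\Psi_2(x,x_1),\Psi_3(x,x_1))\subset\R^2$ equipped with an appropriate Jacobian density.

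I then combine four ingredients. Hausdorff--Young in $z_1$, admissible since $p'>2$, yields $\|(ud\sigma)^{\vee}(\cdot,z_2,z_3)\|_{L^{p'}_{z_1}}\le C\|G(\cdot;z_2,z_3)\|_{L^p_{x_1}}$. Minkowski's integral inequality, valid since $p'\ge p$, swaps the $x_1$- and $(z_2,z_3)$-integrations, giving $\|(ud\sigma)^{\vee}\|_{L^{p'}(\R^3)}^p\le C\int\|G(x_1;\cdot,\cdot)\|_{L^{p'}(\R^2)}^p\,dx_1$. For each fixed $x_1$ the classical extension theorem for finite-type curves in $\R^2$ (Sj\"olin, Drury, Prestini and others), applicable in precisely the range $p'>4$, $p'\ge k+2$, $p'\ge(k+1)q$, bounds $\|G(x_1;\cdot,\cdot)\|_{L^{p'}(\R^2)}$ by $C\|u(\Psi(\cdot,x_1))\|_{L^{q'}_x}$. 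Finally, H\"older's inequality in the compactly supported variable $x_1$, permitted by $p\le q'$ (equivalently $q\le p'$, which follows from the hypotheses), converts the mixed norm $\|u\circ\Psi\|_{L^p_{x_1}L^{q'}_x}$ into $\|u\|_{L^{q'}(S,d\sigma)}$, completing the argument.

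The principal difficulty, and expected main obstacle, is the uniform application of the planar extension theorem. One must check that each $\Gamma_{x_1}$ is of type at most $k$ in $\R^2$ and that the extension constant remains bounded as $x_1$ varies in a neighborhood of $0$. The type-$k$ hypothesis at $P_0$ concerns all directions $\eta\in\R^3$, whereas the type of $\Gamma_{x_1}$ sees only directions in the $(z_2,z_3)$-plane; the flatness relation $\mathbf{v}'\in\mathrm{span}(\gamma',\mathbf{v})$ furnishes the link, since it forces the `bad' normal direction $e_3$ to feel exactly the planar geometry of $\Gamma_{x_1}$. Careful inspection of the Taylor expansion of $\Psi_3(x,x_1)$---possibly combined with a rescaling in $x$ absorbing degenerate leading coefficients---should yield the required uniform type-$k$ control and hence a uniform planar extension constant.
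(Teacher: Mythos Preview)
Your architecture is exactly the paper's: take a partial Fourier transform in the ruling direction (Hausdorff--Young, using $p'>2$), swap norms by Minkowski, apply Sogge's sharp extension theorem for finite-type plane curves to each slice, and finish with H\"older using $p\le q'$. So the outline is correct and matches the paper.

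The gap is precisely the step you yourself flag as the ``principal difficulty'': showing that each slice $\Gamma_{x_1}$ is of type exactly $k$, uniformly in $x_1$, with a lower bound on the $k$-th coefficient. You do not prove this; you only say it ``should'' follow from inspecting Taylor expansions and the flatness relation. This is the substantive content of the theorem. A priori, the hypothesis that $P_0$ has type $k$ controls only the slice through $P_0$; one must show that the type-$k$ behaviour propagates along the ruling. The relation $\mathbf{v}'\in\mathrm{span}(\gamma',\mathbf{v})$ tells you the tangent plane is constant along a ruling, hence that each $\Gamma_{x_1}$ is tangent to the line $\{z_3=0\}$ at the appropriate point, but it does not by itself control the \emph{order} of contact there, nor rule out the $k$-th coefficient degenerating as $x_1$ varies.

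The paper handles this by establishing a normal form (its Proposition~3.2): after an \emph{orthogonal} change of coordinates, the surface is the graph of $f(\xi_1,\xi_2)=a(\xi_1,\xi_2)\,\xi_1^k$ with $a(0,0)\neq0$. The proof is by induction on $k$. The base case $k=2$ comes from a structure result for hypersurfaces with a principal curvature identically zero. For the inductive step, one substitutes the form $f=a\,\xi_1^{k-1}$ into the zero-Gaussian-curvature equation $f_{\xi_1\xi_1}f_{\xi_2\xi_2}=f_{\xi_1\xi_2}^2$, divides by $\xi_1^{2k-4}$, lets $\xi_1\to0$, and obtains the singular ODE
\[
\phi\,\phi''=\tfrac{k-1}{k-2}(\phi')^2,\qquad \phi(0)=0,
\]
for $\phi(t)=a(0,t)$. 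The only solution is $\phi\equiv0$, forcing an extra factor of $\xi_1$ and yielding $f=b\,\xi_1^k$. With this normal form in hand, each slice $\xi_2={\rm const}$ is the curve $\xi_1\mapsto a(\xi_1,\xi_2)\xi_1^k$ with $k$-th derivative $k!\,a(0,\xi_2)\neq0$ for small $|\xi_2|$, and Sogge's theorem applies with uniform constants.

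In your coordinates this normal form reads $F(z_1,z_2)=a(z_2,z_1)\,z_2^k$, which is exactly what would make your slices $\{z_1=x_1\}$ uniformly of type $k$ at $z_2=0$. So your sketch becomes a proof once this normal form is supplied---but the ODE argument is the real work here, and ``careful inspection of the Taylor expansion'' combined with the ruling relation does not obviously produce it.
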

Notice that the numerology
agrees with the sharp
restriction theorem by Sogge
\cite{sogge} for curves of
finite type in the plane,
according to the fact that
$S$ has one principal
curvature identically zero
near $P_0$. We also recall
that the hypothesis of the
vanishing of the Gaussian
curvature near $P_0$ is
equivalent to saying that a
convenient neighbourhood of
$P_0$ is isometric to the
plane. Classical examples of
surfaces with such property
are given by the developable
surfaces (hence, cones,
cylinders, tangent
developables); see Spivak
\cite{spivak} for details. We
point out that, when instead
the Gaussian curvature
vanishes on a one dimensional
submanifold and there are no
umbilic points, decay
estimates for $(d\sigma)^\vee
$ have been recently obtained
by Erd\H{o}s and Salmhofer
\cite{erdos}.
\par The proof of Theorem
\ref{t2} uses the simple
idea, as above, of
transferring restriction
estimates from slices of a
surface (given here by the
above mentioned Sogge's
result) to the surface
itself. However, to this end
we need to prove a new normal
form for $S$ near $P_0$
(Proposition \ref{nf} below),
defined in terms of an {\it
orthogonal} transformation in
$\R^3$, hence particularly
convenient for the
restriction problem. We refer
to Remark
\ref{osservazioneaggiunta}
below for a comparison with
the normal forms in Schulz
\cite{schulz}.\par
 The next two
sections are devoted to the proof of
Theorems \ref{t1} and \ref{t2}
respectively.\par\medskip\noindent{\it
 Acknowledgments.}  I would like to express my gratitude
 to Professor Terence Tao for correspondence
  on the
 subject of this paper, and
 to Professors Antonio J. Di Scala
 and Luigi Rodino
 for helpful discussions.
 Also, I wish to thank the
 referee for interesting comments.
\section{Proof of Theorem \ref{t1}}
We first fix the notation and
recall some preliminary
results which are needed in
the proof of Theorem
\ref{t1}.\par Given a measure
space
$X=(X,\mathcal{B}_X,\mu_X)$,
we denote by
$L^{\alpha,\beta}=L^{\alpha,\beta}(X)$,
$0<\alpha<\infty$, $0<q\leq
\infty$, the Lorentz spaces
on $X$. Hence,
$\|f\|_{L^{\alpha,\beta}}=
\|\lambda\mu(\{|f|\geq\lambda\})^{1/\alpha}\|_{L^\beta\left(\R^+,\frac{d\lambda}
{\lambda}\right)}$. We recall
(see e.g. \cite{steinweiss})
that
$L^{\alpha,\alpha}=L^\alpha$,
and
$L^{\alpha,\beta_1}\hookrightarrow
L^{\alpha,\beta_2}$ if
$\beta_1\leq\beta_2$.
Moreover, H\"older's
inequality for Lorentz spaces
reads as follows: if
$0<\alpha_1,\alpha_2,\alpha<\infty$
and
$0<\beta_1,\beta_2,\beta\leq\infty$
obey
$\frac{1}{\alpha}=\frac{1}{\alpha_1}+
\frac{1}{\alpha_2}$ and
$\frac{1}{\beta}=\frac{1}{\beta_1}+
\frac{1}{\beta_2}$ then
\begin{equation}\label{holder}
\|fg\|_{L^{\alpha,\beta}}\leq
\|f\|_{L^{\alpha_1,\beta_1}}\|f\|_{L^{\alpha_2,\beta_2}}.
\end{equation}
We also recall that there is a sharp
version of the Hausdorff-Young
inequality in terms of Lorentz spaces
in $\R^n$, with the Lebesgue measure
(Corollary 3.16 in \cite{steinweiss},
page 200). Namely, if $1< p\leq 2$, we
have \footnote{We write $A\lesssim B$
if $A\leq C B$ for some constant $C>0$
which may depend on parameters, like
Lebesgue exponents or the dimension
$n$.}
\begin{equation}\label{au}
\|\hat{u}\|_{L^{p'}}\lesssim
\|u\|_{L^{p,p'}}.
\end{equation}
We will need the following
lemma on the interchange of
norms. Consider two measure
spaces
$X=(X,\mathcal{B}_X,\mu_X)$
and
$Y=(Y,\mathcal{B}_Y,\mu_Y)$
and a function $f(x,y)$ on
the product space $(X\times
Y)=(X\times Y,
\mathcal{B}_X\times\mathcal{B}_Y,\mu_X
\times\mu_Y)$. We define the
mixed norms
$L^{\alpha,\beta}_xL^{\gamma,\delta}_y(X\times
Y)$ of $f$ as
\[
\|f\|_{L^{\alpha,\beta}_x
L^{\gamma,\delta}_y(X\times
Y)}=\|\|f(x,\cdot)\|_{L^{\gamma,\delta}(Y)}
\|_{L^{\alpha,\beta}(X)}.
\]
\begin{lem} If
$1< p\leq 2$ we have
\begin{equation}\label{inter}
\|u\|_{L^{p'}_x
L^{p,p'}_y}\lesssim\|u\|_{L^{p,p'}_y
L^{p'}_x}.
\end{equation}
\end{lem}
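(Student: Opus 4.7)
The plan is to reduce the inequality to a one-dimensional weighted comparison via the decreasing rearrangement formulation of the Lorentz norm. Recall the equivalence
\[
\|f\|_{L^{p,p'}}^{p'}\approx \int_0^\infty t^{p'/p-1}(f^*(t))^{p'}\,dt,
\]
which follows from the distributional definition of the paper by the change of variable $t=\mu(\{|f|\geq\lambda\})$, where $f^*$ denotes the decreasing rearrangement. Applying this in the inner norm on each side and using Fubini gives
\[
\|u\|_{L^{p'}_x L^{p,p'}_y}^{p'}\approx \int_0^\infty t^{p'/p-1}A(t)\,dt,\qquad \|u\|_{L^{p,p'}_y L^{p'}_x}^{p'}\approx \int_0^\infty t^{p'/p-1}B(t)\,dt,
\]
where $A(t):=\int_X (u^{*y}(x,t))^{p'}\,dx$ with $u^{*y}(x,\cdot)$ the decreasing rearrangement of $u(x,\cdot)$, and $B(t):=(g^*(t))^{p'}$ with $g(y):=\|u(\cdot,y)\|_{L^{p'}_x}$.

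The key step will be to prove the Hardy-Littlewood type comparison $\int_0^T A(t)\,dt\geq \int_0^T B(t)\,dt$ for every $T>0$. I would derive this from the classical identity $\int_0^T (f^*)^{p'}\,dt=\sup_{|E|\leq T}\int_E|f|^{p'}$ applied in $y$ with $x$ frozen; integrating in $x$ and then noting that allowing the set $E$ to depend on $x$ can only enlarge the supremum yields
\[
\int_0^T A(t)\,dt=\int_X \sup_{|E(x)|\leq T}\int_{E(x)}|u(x,y)|^{p'}\,dy\,dx\ \geq\ \sup_{|E|\leq T}\int_E g(y)^{p'}\,dy=\int_0^T B(t)\,dt.
\]
Moreover Fubini gives that the two total integrals coincide, $\int_0^\infty A=\int_0^\infty B=\|u\|_{L^{p'}(X\times Y)}^{p'}$.

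To conclude I would use a Hardy-Littlewood-P\'olya type argument. Since $p\leq 2\leq p'$, the weight $w(t):=t^{p'/p-1}$ is non-decreasing; setting $\Psi(T):=\int_0^T(A-B)\,dt$, the previous step says that $\Psi\geq 0$ with $\Psi(0)=\Psi(\infty)=0$, so integration by parts gives
\[
\int_0^\infty w(t)[A(t)-B(t)]\,dt=-\int_0^\infty w'(t)\Psi(t)\,dt\leq 0,
\]
which is exactly \eqref{inter}. The one technical point will be the vanishing of the boundary terms in the integration by parts; this is routine when $u$ has compact support (so that $A$ and $B$ are compactly supported in $t$), and the general case then follows by a standard density argument.
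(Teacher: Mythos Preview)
Your argument is correct, and it takes a genuinely different route from the paper's. The paper proves the lemma by real interpolation: Minkowski gives the endpoint $\|u\|_{L^{p'}_x L^1_y}\leq\|u\|_{L^1_y L^{p'}_x}$, Fubini gives the other endpoint $\|u\|_{L^{p'}_x L^{p'}_y}=\|u\|_{L^{p'}_y L^{p'}_x}$, and then the vector-valued real interpolation identities $[L^{p'}_x L^1_y,L^{p'}_x L^{p'}_y]_{\theta,p'}=L^{p'}_x L^{p,p'}_y$ and $[L^1_y L^{p'}_x,L^{p'}_y L^{p'}_x]_{\theta,p'}=L^{p,p'}_y L^{p'}_x}$ (quoted from Triebel) finish the job. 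Your approach instead works directly with rearrangements: you reduce both sides to weighted integrals $\int_0^\infty t^{p'/p-1}A(t)\,dt$ and $\int_0^\infty t^{p'/p-1}B(t)\,dt$, prove the majorization $\int_0^T A\geq\int_0^T B$ via the Hardy--Littlewood identity $\int_0^T(f^*)^{p'}=\sup_{|E|\leq T}\int_E|f|^{p'}$, note that the total masses agree, and conclude by an integration-by-parts (Hardy--Littlewood--P\'olya) argument exploiting that the weight $t^{p'/p-1}$ is nondecreasing precisely when $p\leq 2$. The paper's proof is shorter and avoids the boundary-term and density technicalities, at the cost of invoking black-box interpolation identities for Banach-space-valued Lorentz spaces; your proof is more elementary and self-contained, makes transparent exactly where the hypothesis $p\leq 2$ enters, and shows that the inequality is really a majorization statement in disguise.
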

\begin{proof}
By Minkowski's  inequality we
have
\begin{equation}\label{a100}
\|u\|_{L^{p'}_x
L^1_y}\leq\|u\|_{L^1_y
L^{p'}_x}.
\end{equation}
So the desired estimate
follows by real interpolation
from \eqref{a100} and the
trivial estimate
$\|u\|_{L^{p'}_x
L^{p'}_y}=\|u\|_{L^{p'}_y
L^{p'}_x}$. Indeed, if
$\frac{1}{p}=\frac{1-\theta}{1}+\frac{\theta}{p'}$
we have
\[
[L^{p'}_x L^1_y,L^{p'}_x
L^{p'}_y]_{\theta,p'}
=L^{p'}_x L^{p,p'}_y
\]
 by (3) of \cite{triebel},
page 128, and (16) of
\cite{triebel}, page 134,
whereas
\[
[L^{1}_y L^{p'}_x,L^{p'}_y
L^{p'}_x]_{\theta,p'}
=L^{p,p'}_y L^{p'}_x,
\]
again by (16) of
\cite{triebel}, page 134.\par
This concludes the proof.
\end{proof}
We now prove Theorem \ref{t1}
separately in the three
cases, namely for the sphere,
the paraboloid and the
hyperboloid. Although the
three proofs follow a similar
pattern and, in fact, the
part b) for the sphere and
the hyperboloid follow from
that for the paraboloid
combined with results in
\cite{tao02}, for the
convenience of the reader we
present each proof in a
self-contained form. With
abuse of notation we always
identify functions on the
cone with functions in
$\R^n_{\xi'}$. Moreover we
will make use, both in the
hypotheses and in the
conclusion, of the
formulation $R^\ast_S(q'\to
p')$, which is easily seen to
be equivalent to $R_S(p\to
q)$. We also suppose $p>1$,
since the case $p=1$ is
trivial.
\begin{proof}[Proof that spherical restriction $\Rightarrow$ conical restriction]
Here we prove the conclusions of
Theorem \ref{t1},  under {\it i}) or
{\it i})'. We use polar coordinates
$(r,\omega)$ in $\R^n_\xi$, and we
denote by $d\omega$ the measure on the
sphere and by $L^{\alpha,\beta}$ the
Lorentz spaces on $\R^+$ with respect
to Lebesgue measure.
\par
Assume {\it i})'. We have
\begin{align}
(u\,d\sigma_{cone})^\vee(x,t)&=\int
e^{2\pi i(x\xi+t|\xi|)}
u(\xi)\,\frac{d\xi}{|\xi|}\nonumber\\
&=\int_0^{+\infty} e^{2\pi itr}
r^{n-2}\int_{\mathbb{S}^{n-1}}
 e^{2\pi irx\omega}
 u(r\omega)\,d\omega\,dr.\label{hh1}
\end{align}
Hence, by the Hausdorff-Young
inequality \eqref{au} and
\eqref{inter} we have
\begin{align}
\|(u\,d\sigma_{cone})^\vee\|_{L^{p'}}&=
\|(u\,d\sigma_{cone})^\vee\|_{L^{p'}_x
L^{p'}_t}\nonumber\\
&\lesssim\|r^{n-2}\int_{\mathbb{S}^{n-1}}
 e^{2\pi irx\omega}
 u(r\omega)\,d\omega\|_{{L^{p'}_x}L^{p,p'}_r}
 \nonumber\\
&\lesssim\|r^{n-2}\int_{\mathbb{S}^{n-1}}
 e^{2\pi irx\omega}
 u(r\omega)\,d\omega\|_{{L^{p,p'}_r}L^{p'}_x}.\nonumber
\end{align}
Now, a change of variables and the
hypothesis give
\begin{align}
\|\int_{\mathbb{S}^{n-1}}
 e^{2\pi irx\omega}
 u(r\omega)\,d\omega\|_{L^{p'}_x}&=
 r^{-\frac{n}{p'}}\|\int_{\mathbb{S}^{n-1}}
 e^{2\pi ix\omega}
 u(r\omega)\,d\omega\|_{L^{p'}_x}\nonumber\\
 &\lesssim
 r^{-\frac{n}{p'}}\|u(r\cdot)
 \|_{L^{q'}(\mathbb{S}^{n-1})}.
 \nonumber
 \end{align}
 Hence we obtain
 \begin{align}
 \|(u\,d\sigma_{cone})^\vee\|_{L^{p'}}
 &\lesssim
 \| r^{n-2-\frac{n}{p'}}\|u(r\cdot)\|_{L^{q'}
 (\mathbb{S}^{n-1})}
 \|_{L^{p,p'}}\label{g0}\\
 &=\|r^{\frac{n-2}{q}-\frac{n}{p'}}
 \cdot r^\frac{n-2}{q'}\|u(r\cdot)\|_
 {L^{q'}(\mathbb{S}^{n-1})}\|_{L^{p,p'}}\nonumber\\
 &\lesssim
 \|\underbrace{r^{\frac{n-2}{q}-\frac{n}{p'}}}_{F(r)}
 \cdot \underbrace{r^\frac{n-2}{q'}\|u(r\cdot)\|_
 {L^{q'}(\mathbb{S}^{n-1})}}_{G(r)}\|_{L^{p,q'}},
 \nonumber
 \end{align}
where the last inequality
follows because $p'>q'$. Now,
let $\alpha$ be defined by
 $\frac{1}{\alpha}+\frac{1}{q'}=\frac{1}{p}$.
 A direct computation shows that $F\in
 L^{\alpha,\infty}$ (since the pair $p,q$ is
 scale invariant).
Hence, by
 H\"older's inequality for
 Lorentz spaces \eqref{holder}, we see that the last
 expression is not greater than
 \[
 \|F\|_{L^{\alpha,\infty}}\|G\|_{L^{q',q'}}=
 C\|G\|_{L^{q'}}
 =C\|u\|_{L^{q'}(\R^n,\frac{d\xi}{|\xi|})}.
 \]
 This concludes the proof of the restriction estimates for the whole cone.
\par
The proof of the restriction
estimate for compact subsets
of the cone, under the
assumption {\it i}), is even
easier, since this amounts to
proving the extension
estimate for $u$ supported
where $r\approx 1$ so that
one concludes using
\eqref{g0}, the embedding
$L^{p}\hookrightarrow
L^{p,p'}$ and H\"older's
inequality for $L^p$ spaces
(since $p<q'$).
\end{proof}
\begin{proof}[Proof that parabolic
restriction $\Rightarrow$ conical
restriction] Here we prove the
conclusions in Theorem \ref{t1},
assuming {\it ii}) or {\it ii})'.\par
 We
introduce orthogonal
coordinates
\[
(a_1,\dots,a_{n-1},a_n,b)=
\left(\xi_1,\ldots,\xi_{n-1},\frac{\tau+\xi_n}
{\sqrt{2}},\frac{\tau-\xi_n}{\sqrt{2}}\right),
\]
 and we use the notation $a=(a',a_n)$,
  $a'=(a_1,\ldots,a_{n-1})$. With such coordinates the cone
 $\tau=|\xi|$ has equation
 \[
 b=\frac{|a'|^2}{2a_n},\quad
 a_n>0,
 \]
 and the Lorentz invariant measure becomes
 the pull-back under the projection $(a,b)\mapsto a$ of the measure
 $\frac{da}{\sqrt{2}a_n}$.
 Again we denote by $L^{\alpha,\beta}$ the
Lorentz spaces on the real
semi-axis $\R^+$ with respect
to the Lebesgue measure. Set
moreover $x=(x',x_n)$.\par
Suppose {\it ii})'. We have
\begin{align}
(u\,d\sigma_{cone})^\vee(x,t)&=\frac{1}{\sqrt{2}}
\int e^{2\pi i\left(x\cdot
a+\frac{|a'|^2}{2a_n}t\right)}
u(a)\,\frac{da}{a_n}\nonumber\\
&=\frac{1}{\sqrt{2}}\int
e^{2\pi ix_n a_n} \int
e^{2\pi i\left(x'\cdot
a'+\frac{|a'|^2}{2a_n}t\right)}
u(a)\,\frac{da'}{a_n}\,da_n\nonumber.\nonumber
\end{align}
Hence, by \eqref{au} and
\eqref{inter}
\begin{align}
\|(u\,d\sigma_{cone})^\vee\|_{L^{p'}}&=
\|(u\,d\sigma_{cone})^\vee\|_{L^{p'}_{x',t}
L^{p'}_{x_n}}
\nonumber\\
&\lesssim \|\int e^{2\pi
i\left(x'\cdot
a'+\frac{|a'|^2}{2a_n}t\right)}
u(a)\,\frac{da'}{a_n}\|_{{L^{p'}_{x',t}}
L^{p,p'}_{a_n}}
 \nonumber\\
&\lesssim \|\int e^{2\pi
i\left(x'\cdot
a'+\frac{|a'|^2}{2a_n}t\right)}
u(a)\,\frac{da'}{a_n}\|_{{L^{p,p'}_{a_n}}
L^{p'}_{x',t}}. \nonumber
\end{align}
Now, changing variables and
the hypothesis give
\begin{align}
\|\int e^{2\pi i\left(x'\cdot
a'+\frac{|a'|^2}{2a_n}t\right)}
u(a)\,\frac{da'}{a_n}\|_{L^{p'}_{x',t}}&=
 a_n^{\frac{1}{p'}}\|\int
e^{2\pi i\left(x'\cdot
a'+\frac{|a'|^2}{2}t\right)}
u(a)\,\frac{da'}{a_n}\|_{L^{p'}_{x',t}}\nonumber\\
 &\lesssim
 a_n^{\frac{1}{p'}-1}\|{u(\cdot,a_n)}\|_{L^{q'}}.
 \nonumber
 \end{align}
We deduce
 \begin{align}
 \|(u\,d\sigma_{cone})^\vee\|_{L^{p'}}
 &\lesssim
 \| a_n^{\frac{1}{p'}-1}\|u(\cdot,a_n)\|
 _{L^{q'}}
 \|_{L^{p,p'}}\label{g02}\\
 &=\|a_n^{\frac{1}{q'}-\frac{1}{p}}
 a_n^{-\frac{1}{q'}}\|u(\cdot,a_n)\|
 _{L^{q'}}
 \|_{L^{p,p'}}\nonumber\\
 &\lesssim
 \|a_n^{\frac{1}{q'}-\frac{1}{p}}
 \cdot a_n^{-\frac{1}{q'}}\|u(\cdot,a_n)
 \|_{L^{q'}}\|_{L^{p,q'}},
 \nonumber
 \end{align}
for $p'>q'$.  Then one
concludes by applying
H\"older's
   inequality \eqref{holder}, since $
   L^{\frac{qp'}{p'-q},\infty}\cdot
   L^{q',q'}\hookrightarrow L^{p,q'}$.\par
   Let us now assume {\it
   ii}). By symmetry and the
   triangle inequality
   we can take $u$ supported
   in the sector
   $1\lesssim|a'|\leq
   a_n\lesssim 1$.
Then one can obtain the
desired estimate by using
\eqref{g02}, the embedding
$L^{p}\hookrightarrow
L^{p,p'}$ and H\"older's
inequality for $L^p$ spaces
(since $p<q'$).
\end{proof}
\begin{proof}[Proof that hyperbolic
restriction $\Rightarrow$ conical
restriction] Now we prove the
conclusions of Theorem \ref{t1} when
{\it iii}) or {\it iii})' are
satisfied.\par Denote by
$L^{\alpha,\beta}$ the Lorentz spaces
on $\R$ with the Lebesgue measure, and
set $x=(x',x_n)$.\par
 Assume first
{\it iii})'. We write the Fourier
extension operator as
\begin{align}
(u\,d\sigma_{cone})^\vee(x,t)&=\int
e^{2\pi
i\left(x\cdot\xi+t|\xi|\right)}
u(\xi)\,\frac{d\xi}{|\xi|}\nonumber\\
&=\int e^{2\pi ix_n \xi_n}
\int e^{2\pi i\left(x'\cdot
\xi'+t|\xi|\right)}
u(\xi)\,\frac{d\xi'}{|\xi|}\,d\xi_n\nonumber.\nonumber
\end{align}
By applying \eqref{au} and
\eqref{inter}
\begin{align}
\|(u\,d\sigma_{cone})^\vee\|_{L^{p'}}&=
\|(u\,d\sigma_{cone})^\vee\|_{L^{p'}_{x',t}
L^{p'}_{x_n}}
\nonumber\\
&\lesssim \|\int e^{2\pi
i\left(x'\cdot
\xi'+t|\xi|\right)}
u(\xi)\,\frac{d\xi'}{|\xi|}\|_{{L^{p'}_{x',t}}L^{p,p'}_
{\xi_n}}
 \nonumber\\
&\lesssim\|\int e^{2\pi
i\left(x'\cdot
\xi'+t|\xi|\right)}
u(\xi)\,\frac{d\xi'}{|\xi|}\|_{{L^{p,p'}_
{\xi_n}}L^{p'}_{x',t}}.
\nonumber
\end{align}
Now, a change of variables and the
hypothesis give
\begin{align}
\|\int &e^{2\pi
i\left(x'\cdot
\xi'+t|\xi|\right)}
u(\xi)\,\frac{d\xi'}{|\xi|}\|_{L^{p'}_{x',t}}
\\
&=
 |\xi_n|^{n-2-\frac{n}{p'}}\|\int
e^{2\pi i\left(x'\cdot
\xi'+t\sqrt{1+|\xi'|^2}
\right)}
u(\xi_n\xi',\xi_n)\frac{d\xi'}
{\sqrt{1+|\xi'|^2}}
\|_{L^{p'}_{x',t}}\nonumber\\
 &\lesssim|\xi_n|^{n-2-\frac{n}{p'}}
 \|u(\xi_n\xi',\xi_n)
 (1+|\xi'|^2)^{-\frac{1}{2q'}}
 \|_{L^{q'}_{\xi'}}.
 \nonumber
 \end{align}
 It follows that
 \begin{align}
 \|(u\,d\sigma_{cone})^\vee\|_{L^{p'}}&
 \lesssim
 \||\xi_n|^{n-2-\frac{n}{p'}}
 \|u(\xi_n\xi',\xi_n)
 (1+|\xi'|^2)^{-\frac{1}{2q'}}
 \|_{L^{q'}_{\xi'}}
 \|_{L^{p,p'}_{\xi_n}}\nonumber\\
 &=\||\xi_n|^{\frac{n-2}{q}-\frac{n}{p'}}
 \|u(\xi)|\xi|^{-\frac{1}{q'}}\|_{L^{q'}_{\xi'}}
 \|_{L^{p,p'}_{\xi_n}}\label{g03}.
 \end{align}
 Again one concludes by using the
 embedding $L^{p,q'}_{\xi_n}\hookrightarrow
 L^{p,p'}_{\xi_n}$ and H\"older's
 inequality
 \eqref{holder} (for the pair $p,q$ is scale invariant).\par
 Assume now {\it iii}). By symmetry and the triangle inequality
  we can take
  $u$ supported
where $|\xi'|\lesssim 1$ and
$\xi_n\approx1$. Hence the
desired conclusion follows
from \eqref{g03}, the
embedding
$L^{p}\hookrightarrow
L^{p,p'}$, and H\"older's
inequality for $L^p$ spaces
(since $p<q'$).
\end{proof}
\section{Proof of Theorem
\ref{t2}} We need the following result
on the normal form of a hypersurface
$S$ in $\R^n$, which is proved in
\cite{nicola} (Proposition 2.2).\par
For $P\in S$, denote by $\nu(P)$ the
number of principal curvatures which
vanish at $P$ (i.e. the dimension of
the kernel of the second fundamental
form at $P$).
\begin{pro}\label{prpr}
     Let $S$ be a hypersurface in
     $\R^n$, $P_0\in S$ and
     $\underline{\nu}:=\liminf\limits_{P\to{P_0}}\nu(P)
     \not=0,n-1$.
      There is an orthogonal system of
     coordinates $(\xi',\xi^{\prime\prime},\xi_n)$,
     $\xi'=(\xi_1,\ldots,\xi_{n-1-\underline{\nu}})$,
     $\xi^{\prime\prime}=(\xi_{n-\underline{\nu}},
     \ldots,\xi_{n-1})$
     with the origin at $P_0$
      such that, in a neighbourhood of
     $P_0$, $S$ is the graph of a function
     $\xi_n=\phi(\xi',\xi^{\prime\prime})$ of the type
  \begin{equation}\label{3star}
\phi(\xi',\xi^{\prime\prime})=\langle
M(\xi',\xi^{\prime\prime})\xi',\xi'\rangle,
\end{equation}
where $M$ is a square matrix of size
 $n-1-\underline{\nu}$ with
smooth entries.
   \end{pro}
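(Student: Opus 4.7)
The plan is to reduce everything to a question about a single scalar graphing function. By translating so that $P_0$ is the origin and rotating so that $T_{P_0}S=\{\xi_n=0\}$, one can write $S$ locally as the graph $\xi_n=f(\eta)$, $\eta=(\xi_1,\dots,\xi_{n-1})$, with $f(0)=0$ and $\nabla f(0)=0$. Since the second fundamental form at the point $(\eta,f(\eta))$ is, up to a positive conformal factor, the Hessian $\mathrm{Hess}\,f(\eta)$, one has $\nu(P)=(n-1)-\mathrm{rank}\,\mathrm{Hess}\,f(\eta)$. Because $\nu$ is integer-valued, the hypothesis $\liminf_{P\to P_0}\nu(P)=\underline{\nu}$ actually forces $\nu(P)\geq\underline{\nu}$ in a whole neighbourhood of $P_0$, so $\mathrm{rank}\,\mathrm{Hess}\,f\leq n-1-\underline{\nu}$ there, with equality on an open dense subset $U$.

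The goal is then to exhibit a linear subspace $V\subset\R^{n-1}$ of dimension $\underline{\nu}$ along which both $f$ and $\nabla f$ vanish identically in a neighbourhood of the origin. Granting this, I would rotate the first $n-1$ coordinates so that $V$ becomes $\{\xi'=0\}$; writing $\phi(\xi',\xi'')$ for the resulting graphing function, the vanishing of $f$ and $\nabla f$ on $V$ translates to $\phi(0,\xi'')\equiv 0$ and $\partial_{\xi'_i}\phi(0,\xi'')\equiv 0$ for each $i$. Two applications of Taylor's formula with integral remainder in the $\xi'$ variables then yield
\begin{equation*}
\phi(\xi',\xi'')=\int_0^1\!\!\int_0^1 t\,\langle\mathrm{Hess}_{\xi'}\phi(st\xi',\xi'')\,\xi',\xi'\rangle\,ds\,dt,
\end{equation*}
which is exactly of the form \eqref{3star} with a smooth matrix-valued $M$ of size $n-1-\underline{\nu}$.

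To produce $V$ I would invoke the relative nullity machinery. On the open dense set $U$ where $\mathrm{rank}\,\mathrm{Hess}\,f$ is the constant value $n-1-\underline{\nu}$, the kernel of the second fundamental form defines a smooth involutive distribution of rank $\underline{\nu}$; by the classical Codazzi-based flatness argument (Chern--Kuiper/Hartman), its leaves are open pieces of affine $\underline{\nu}$-dimensional subspaces of $\R^n$ sitting inside $S$, along which the unit normal is constant. Next I would pick a sequence $P_k\in U$ with $P_k\to P_0$, let $A_k\subset\R^n$ be the affine subspace through $P_k$ containing the leaf, and extract, via compactness of the Grassmannian, a subsequence along which the translates $A_k-P_k$ converge to a linear subspace $V\subset\R^n$ of dimension $\underline{\nu}$. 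Continuity of the Gauss map together with $\nabla f(0)=0$ places $V$ inside $T_{P_0}S=\{\xi_n=0\}$, which I identify with $\R^{n-1}$; the closedness of $S$, together with $f|_{A_k}\equiv f(P_k)\to 0$ and $\nabla f|_{A_k}\equiv\nabla f(P_k)\to 0$, then gives $f|_V\equiv 0$ and $\nabla f|_V\equiv 0$ in a neighbourhood of $0$.

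The main obstacle is precisely this third step: one must control the affine leaves uniformly up to the possibly singular set $S\setminus U$ where the nullity jumps, i.e.\ show that the leaves $A_k\cap S$ are affine pieces of $S$ of uniformly positive size as $P_k\to P_0$, so that the Grassmannian limit genuinely produces a non-trivial subspace through $P_0$ rather than collapsing to a point. This uniform extendability of the relative nullity leaves is exactly the content of Hartman-type extrinsic flatness theorems, and is where the Codazzi equations together with the closedness of the rank-dropping locus must be used in full; everything else in the argument (the Taylor-remainder computation and the rotation) is then essentially mechanical.
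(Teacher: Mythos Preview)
The paper does not actually prove this proposition: it is quoted verbatim from \cite{nicola} (Proposition~2.2) and is used here as a black box, so there is no in-paper argument to compare your proposal against.

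That said, your outline is the natural one and aligns with how such normal-form results are typically obtained. The reduction to a graph $\xi_n=f(\eta)$, the identification $\nu(P)=(n-1)-\mathrm{rank}\,\mathrm{Hess}\,f$, and the Taylor-remainder step producing the matrix $M$ once the subspace $V$ has been found are all correct and routine. Your use of the relative nullity foliation on the open set where the rank is exactly $n-1-\underline{\nu}$, together with a Grassmannian compactness argument to manufacture $V$, is also the standard strategy. Two small inaccuracies: the set $U$ need not be \emph{dense} near $P_0$ (you only need, and only have, that $P_0\in\overline{U}$), and $f$ is not constant along the projection of a leaf $A_k$ but merely affine there, with slope $\nabla f(\eta_k)\to 0$; neither point affects the scheme. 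You have correctly isolated the genuine issue, namely the uniform lower bound on the size of the nullity leaves as one approaches the rank-dropping locus; without this the limit $V$ could fail to sit inside $S$. Invoking a Hartman-type completeness/extendability theorem for the relative nullity foliation is exactly what is needed, but as written your argument defers the whole content of the proposition to that citation, so it remains a sketch rather than a self-contained proof.
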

We now prove a finer result for a
surface in $\R^3$ with Gaussian
curvature identically zero near a point
of type $\geq k$.
\begin{pro}\label{nf}
Let $S$ be a surface in
$\R^3$ and $P_0\in S$ a point
of type $\geq k$. Suppose
that the Gaussian curvature
of $S$ vanishes identically
near $P_0$. Then there is an
orthogonal system of
coordinates
$(\xi_1,\xi_2,\xi_3)$ with
the origin at $P_0$ such
that, in a neighbourhood of
$P_0$, $S$ is the graph of a
function
$\xi_3=f(\xi_1,\xi_2)$ of the
type
\begin{equation}\label{0a0}
f(\xi_1,\xi_2)=a(\xi_1,\xi_2)\xi_1^k,
\end{equation}
for some smooth function $a$
defined in an open
neighbourhood of $0$.
\end{pro}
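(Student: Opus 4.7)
My plan is to combine Proposition \ref{prpr} with an iterative factorization driven by the Monge--Ampère equation $\det\mathrm{Hess}\,f=0$. The hypothesis that the Gaussian curvature vanishes identically near $P_0$ gives $\nu(P)\geq 1$ for every $P$ near $P_0$, so $\underline{\nu}\in\{1,2\}$. If $\underline{\nu}=2$ then $\nu\equiv 2$ on a neighborhood of $P_0$, the second fundamental form vanishes there, and $S$ is locally a plane; any orthogonal coordinates in which that plane is $\{\xi_3=0\}$ yield the conclusion with $a\equiv 0$. From now on I treat the case $\underline{\nu}=1$, in which Proposition \ref{prpr} provides orthogonal coordinates $(\xi_1,\xi_2,\xi_3)$ centered at $P_0$ where $S$ is the graph of a smooth function $f(\xi_1,\xi_2)=\xi_1^2 m(\xi_1,\xi_2)$.

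The technical core will be the following Key Lemma, which I intend to prove first: \emph{if $f=\xi_1^j q(\xi_1,\xi_2)$ for some $j\geq 2$ and smooth $q$, and if $\det\mathrm{Hess}\,f\equiv 0$ near the origin with $q(0,0)=0$, then $q(0,\xi_2)\equiv 0$ in a neighborhood of $\xi_2=0$.} The idea is to compute $\mathrm{Hess}\,f$ entry-by-entry and to notice that
\[
\det\mathrm{Hess}\,f=\xi_1^{2j-2}\bigl[j(j-1)\,q\,q_{22}-j^2\,q_2^{2}+\xi_1\,R(\xi_1,\xi_2)\bigr]
\]
with $R$ smooth. Continuity forces the bracket to vanish identically, and its restriction to $\xi_1=0$ yields, with $\mu(\xi_2):=q(0,\xi_2)$, the ODE $\mu\mu''=\tfrac{j}{j-1}(\mu')^2$. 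On any interval where $\mu\neq 0$, the substitution $w=\log|\mu|$ linearizes this to $w''=(w')^2/(j-1)$, which integrates explicitly to $\mu(\xi_2)=\pm C(\xi_2+D)^{-(j-1)}$ for finite constants $C,D$. Such a $\mu$ cannot approach zero at any finite point, so together with $\mu(0)=0$ this forces $\mu\equiv 0$ near $0$.

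Armed with the lemma I iterate. Setting $q_2:=m$ and $j:=2$ in the representation $f=\xi_1^j q_j$, I apply the following dichotomy at each step. If $q_j(0,0)\neq 0$ then $f$ vanishes to exactly order $j$ at the origin, so the type of $P_0$ equals $j$; the hypothesis type $\geq k$ forces $j\geq k$, and then $f=\xi_1^k\cdot(\xi_1^{j-k}q_j)$ is of the desired form. If instead $q_j(0,0)=0$, the Key Lemma gives $q_j(0,\xi_2)\equiv 0$ in a neighborhood of $\xi_2=0$, and Hadamard's lemma produces a smooth $q_{j+1}$ with $q_j=\xi_1 q_{j+1}$, so $f=\xi_1^{j+1}q_{j+1}$; I replace $(j,q_j)$ by $(j+1,q_{j+1})$ and repeat. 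The first alternative at any $j<k$ would contradict the type hypothesis, so the second alternative must persist through $j=2,\ldots,k-1$ and the iteration arrives at $f=\xi_1^k q_k$ after finitely many steps; taking $a:=q_k$ yields the claimed normal form.

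The main obstacle will be the ODE analysis in the Key Lemma, where I must exclude, in the smooth (rather than analytic) category, flat nontrivial solutions of $\mu\mu''=\tfrac{j}{j-1}(\mu')^2$ passing through zero. The explicit closed form $\pm C(\xi_2+D)^{-(j-1)}$ on each component of $\{\mu\neq 0\}$, combined with the impossibility of its approaching zero at any finite $\xi_2$, will supply the required rigidity; the rest of the argument is essentially bookkeeping with Hadamard's lemma.
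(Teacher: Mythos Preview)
Your proposal is correct and follows essentially the same route as the paper: both invoke Proposition~\ref{prpr} (treating the planar case $\underline{\nu}=2$ separately), restrict the Monge--Amp\`ere equation $f_{\xi_1\xi_1}f_{\xi_2\xi_2}=f_{\xi_1\xi_2}^2$ to $\xi_1=0$ to obtain the ODE $\mu\mu''=\tfrac{j}{j-1}(\mu')^2$ with $\mu(0)=0$, solve it explicitly to force $\mu\equiv 0$, and then pull out one more factor of $\xi_1$ via Taylor/Hadamard. The only difference is packaging---the paper phrases this as induction on $k$ while you iterate on $j$ and isolate the ODE step as a standalone Key Lemma.
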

We emphasise that the
transformation which brings
$S$ into the desired form is
an orthogonal one, and not
merely smooth. This will be
essential for applications to
the restriction problem.
Incidentally, we also see
that the notion of ``point of
type $k$" propagates along a
segment containing $P_0$ as
interior point (in
particular, the set of points
of type $k$ does not have
isolated points).
\begin{proof}[Proof of Proposition
\ref{nf}] The proof uses
induction on $k$. The
statement is true for $k=2$.
This follows from Proposition
\ref{prpr} with $n=3$, if
$\underline{\nu}=1$, whereas
if $\underline{\nu}=2$ then a
neighbourhood of $P_0$ lies
on a plane, and the result is
trivial. One could
 also obtain the result for $k=2$
 as a consequence of Corollary 6 of
 \cite{spivak}, page 359,
 if $P_0$ is of
type $2$ and Corollary 7 of
\cite{spivak}, page 361, if $P_0$ is of
type $>2$.\par Suppose then that the
statement is true with $k-1$ in place
of $k$ and let $P_0$ be a point of type
$\geq k\geq3$. By the inductive
hypothesis there are orthogonal
coordinates $(\xi_1,\xi_2,\xi_3)$ for
which $S$ coincides, near the origin,
with the graph of a function
\begin{equation}\label{a0}
f(\xi_1,\xi_2)=a(\xi_1,\xi_2)\xi_1^{k-1}.
\end{equation}
Observe that the hypothesis
on the Gaussian curvature can
be expressed by the equation
\begin{equation}\label{a1}
f_{\xi_1\xi_1}f_{\xi_2\xi_2}=
f_{\xi_1\xi_2}^2.
\end{equation}
After substituting in
\eqref{a1} the expression for
$f$ given in \eqref{a0} we
divide by $\xi_1^{2k-4}$ and
let $\xi_1\to0$. Upon setting
$\phi(t)=a(0,t)$ we find the
following (singular) Cauchy
problem
\begin{equation}\label{a2}
\phi
\phi^{\prime\prime}=
\frac{k-1}{k-2}(\phi^\prime)^2,\quad
\phi(0)=0.
\end{equation}
The initial condition in
\eqref{a2} comes from the
fact that $P_0$ is of type
$\geq k$ and $\nabla f(0)=0$,
so that $\partial^\alpha
f(0)=0$ for every
$\alpha\in\mathbb{Z}^2_+$,
$|\alpha|\leq k-1$.\par To
finish the proof it suffices
to verify that the only
solution to \eqref{a2} is the
trivial one: $\phi(t)=0$ for
every $t$. In fact, a Taylor
expansion of $a(\xi_1,\xi_2)$
at $\xi_1=0$ then gives
$a(\xi_1,\xi_2)=b(\xi_1,\xi_2)\xi_1$
for some smooth $b$, and
therefore
$f(\xi_1,\xi_2)=b(\xi_1,\xi_2)\xi_1^k$.\\
To this end we observe that
the maximal non-constant
solutions to the equation in
\eqref{a2} (in the region
where the Cauchy
wellposedness theorem
applies) are of the form
\[
\phi(t)=\pm(At+B)^{\frac{1}{1-\alpha}},\
\alpha=\frac{k-1}{k-2},\
A\not=0,
\] defined on
$(-B/A,+\infty)$ if $A>0$, or
$(-\infty,-B/A)$ if $A<0$. At
any rate, since $\alpha>1$,
they blow up at $t=-B/A$ and
an elementary continuity
argument then shows that
there is no solution
$\phi\not\equiv0$ with
$\phi(0)=0$.
\end{proof}
\begin{oss}\label{osservazioneaggiunta}\rm
We point out that useful
normal forms were obtained by
Schulz \cite{schulz} for
convex hypersurfaces $S$ of
finite type, in the sense
(different from that in the
present paper) that $S$ has
no tangents of infinite
order. In particular we see
that this condition is never
satisfied here, because in
\eqref{0a0} we have
$f(0,\xi_2)\equiv0$. Moreover
it is worth noting that the
normal forms in \cite{schulz}
are expressed in terms of a
Taylor expansion at a given
point $P_0$, whereas here we
deal with the geometry of $S$
in a whole neighbourhood of
$P_0$.
\end{oss}
We also recall the following
result by Sogge \cite{sogge}
(see also \cite{stein}, page
418).
\begin{teo}\label{sog} Let
$\psi$ be a smooth function
on an interval $[-a,a]$, with
$\psi^{(j)}(0)=0$ for $1\leq
j\leq k-1$ and
$\psi^{(k)}(0)\not=0$.
  Then there exist constants $0<\delta<a$
  and $C>0$ such that
  \[
  \|\int_{-\delta}^\delta e^{2\pi
  i (tx_1+\psi(t)x_2)}
  g(t)\,dt\|_{L^{p'}(\R^2)}\leq
  C \|g\|_{
  L^{q'}(-\delta,\delta)},
  \]
  for every $p'>4$, $p'\geq
k+2$, $p'\geq (k+1)q$, and
  $g\in
  L^{q'}(-\delta,\delta)$.
  Moreover the constants $\delta$ and $C$
   depend only on
   $a$,$p$,$q$,$k$, on upper
   bounds for finitely many
   derivatives of $\psi$ on
   $[-a,a]$ and a lower
   bound for
   $\psi^{(k)}(0)$.
  \end{teo}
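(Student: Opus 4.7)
The plan is to prove Sogge's estimate by dyadic decomposition near the finite-type point, rescaling each dyadic piece to a reference arc of nonvanishing curvature, applying the sharp restriction theorem for curves in $\R^2$ with nonzero curvature (Fefferman--Zygmund) uniformly on each piece, and finally summing the dyadic contributions via an almost-orthogonality argument.

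By symmetry I reduce to $t \in (0, \delta]$. Decompose this into dyadic intervals $I_j = (2^{-j-1}\delta, 2^{-j}\delta]$, $j \geq 0$, and write the operator of the theorem as $T = \sum_{j\geq 0}T_j$ where
$$T_j g(x) = \int_{I_j} e^{2\pi i(tx_1+\psi(t)x_2)}\, g(t)\, dt.$$
On $I_j$ I change variables $t=2^{-j}\delta s$ with $s\in(1/2,1]$ and introduce the rescaled phase $\psi_j(s):=(2^{-j}\delta)^{-k}\psi(2^{-j}\delta s)$. By Taylor's formula and the vanishing conditions $\psi^{(m)}(0)=0$ for $1\leq m\leq k-1$, one has $\psi_j(s) = \tfrac{\psi^{(k)}(0)}{k!}s^k + O(2^{-j}\delta)$ in every $C^m([1/2,1])$-norm, so the family $\{\psi_j\}$ is uniformly smooth with $|\psi_j''(s)|\geq c>0$ uniformly for $s\in[1/2,1]$ and all $j$ sufficiently large (provided $\delta$ was chosen small enough).

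Rescaling also the output variables by $(x_1,x_2)=(2^j\delta^{-1}y_1,\, 2^{jk}\delta^{-k}y_2)$ and tracking Jacobians, the Fefferman--Zygmund sharp restriction theorem for curves of nonvanishing curvature in $\R^2$ --- applicable uniformly in $j$ since the curvatures of $s\mapsto(s,\psi_j(s))$ are bounded above and below independent of $j$ --- yields
$$\|T_j g\|_{L^{p'}(\R^2)} \leq C\, 2^{j((k+1)/p' + 1/q' - 1)}\, \|g\chi_{I_j}\|_{L^{q'}}.$$
The exponent is non-positive exactly when $p' \geq (k+1)q$, which is the third hypothesis. In the strict case $p' > (k+1)q$ the geometric gain allows immediate summation via the triangle inequality, so the crucial task is the scale-invariant case $p' = (k+1)q$, where only the uniform bound $\|T_j g\|_{L^{p'}} \lesssim \|g\chi_{I_j}\|_{L^{q'}}$ is available and a naive triangle-inequality sum diverges.

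For this case I exploit almost orthogonality: the Fourier transform of $T_j g$ is supported on the dyadic arc $\{(t,\psi(t)):t\in I_j\}$, which fits inside a rectangle of dimensions $\sim 2^{-j}\times 2^{-jk}$, and these rectangles are essentially pairwise disjoint across $j$. A Littlewood--Paley square-function estimate adapted to this family, together with Minkowski's integral inequality (or interpolation with the trivial $L^{q'}\to L^\infty$ bound) and the disjointness of the $I_j$, yields $\|\sum_j T_j g\|_{L^{p'}} \lesssim \|g\|_{L^{q'}}$. The strict inequality $p' > 4$ is precisely the threshold ensuring boundedness of the relevant square function on $L^{p'}$, and the condition $p'\geq k+2$ absorbs the aspect-ratio cost $2^{j(k-1)}$ of the rectangles when passing from the square function back to the sum. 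The main obstacle is this orthogonality step in the critical case; each single-scale estimate is essentially Fefferman--Zygmund. Uniformity of constants across scales follows because the $C^m$-norms and lower bound for $|\psi_j''|$ are controlled by $C^m$-bounds of $\psi$ on $[-a,a]$ and a lower bound for $|\psi^{(k)}(0)|$, matching the dependence stated at the end of the theorem.
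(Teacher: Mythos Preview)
The paper does \emph{not} prove this theorem. Theorem~\ref{sog} is quoted from Sogge~\cite{sogge} (with a pointer also to \cite{stein}, p.~418); the paper merely remarks that the uniformity of $\delta$ and $C$ and the case $k=2$ follow from Sogge's own argument. So there is no in-paper proof to compare your attempt against.

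On its own terms, your outline follows the standard strategy and the single-scale part is fine: the dyadic decomposition, the rescaling $\psi_j(s)=(2^{-j}\delta)^{-k}\psi(2^{-j}\delta s)$, the uniform nonvanishing of $\psi_j''$ on $[1/2,1]$, and the application of the Fefferman--Zygmund estimate produce the correct factor $2^{j((k+1)/p'+1/q'-1)}$, and the observation that this exponent is $\leq 0$ iff $p'\geq(k+1)q$ is right. This is exactly the mechanism in Sogge's proof.

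The gap is in the endpoint summation. Your orthogonality paragraph is a sketch of intentions rather than an argument: you invoke ``a Littlewood--Paley square-function estimate adapted to this family'' without specifying which one or why it is bounded on $L^{p'}$, and the assertion that the condition $p'\geq k+2$ ``absorbs the aspect-ratio cost $2^{j(k-1)}$'' when passing from square function to sum is not a recognizable step and does not correspond to how that condition actually enters Sogge's proof. In fact, in Sogge's argument the endpoint is obtained not via a square function on the output side but by working at $q'=2$ (where the disjointness of the $I_j$ gives genuine $\ell^2$ orthogonality on the \emph{input}), establishing the $L^2\to L^{p'}$ bound for $p'\geq 2(k+1)$, and then interpolating against the $L^1\to L^\infty$ bound; the condition $p'\geq k+2$ emerges from that interpolation. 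If you want to make your proposal complete, you should either carry out that interpolation explicitly or, if you really intend a square-function argument, state precisely which square function you mean, on which side, and prove its $L^{p'}$ boundedness with the claimed dependence on $k$.
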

The remark on the uniformity
of the constants $\delta$ and
$C$ did not appear in the
statement of \cite{sogge},
but it easily follows from
the proof of that result.
Also, the case $k=2$ was not
explicitly considered there
(because already treated in
\cite{zigmund,fefferman}),
but in any case it also
follows from the same proof,
under the additional (and
necessary) condition $p'>4$.
\begin{proof}[Proof of
Theorem \ref{t2}] We consider
the orthogonal system of
coordinates given in
Proposition \ref{nf}. Hence,
near the point $P_0$ (which
now coincides with the
origin), $S$ is the graph of
a function
$\xi_3=f(\xi_1,\xi_2)$ of the
form \eqref{0a0}. Let
$S_0=\{(\xi_1,\xi_2,f(\xi_1,\xi_2))\in\R^3:
|\xi_1|\leq \delta,
|\xi_2|\leq\delta\}$, where
$\delta>0$ is a small
constant that will be chosen
later on.\par Let $u$ be any
smooth function on $S$
supported in $S_0$. As usual
we will think of $u$ as a
function of the variables
$\xi_1,\xi_2$. Then we have
\begin{multline}
(ud\sigma)^\vee(x_1,x_2,x_3)\\
=\int e^{2\pi
ix_2\xi_2}\left(\int e^{2\pi
i
(x_1\xi_1+x_3f(\xi_1,\xi_2))}
u(\xi_1,\xi_2)\phi(\xi_1,\xi_2)\,d\xi_1\right)
\,d\xi_2,
\end{multline}
where
$\phi(\xi_1,\xi_2)=(1+|\nabla
f(\xi_1,\xi_2)|^2)^{1/2}$.
\par
Since $p'>2$, by the
Hausdorff-Young inequality we
have
\begin{align}
\|(ud\sigma)^\vee\|_{L^{p'}(\R^3)}&\lesssim
\| \int e^{2\pi i
(x_1\xi_1+x_3f(\xi_1,\xi_2))}
u(\xi_1,\xi_2)\phi(\xi_1,\xi_2)\,d\xi_1
\|_{L^{p'}_{x_1,x_3}
L^p_{\xi_2}}\nonumber\\
&\leq \|\int e^{2\pi i
(x_1\xi_1+x_3f(\xi_1,\xi_2))}
u(\xi_1,\xi_2)\phi(\xi_1,\xi_2)\,d\xi_1
\|_{L^p_{\xi_2}L^{p'}_{x_1,x_3}}.\label{a7}
\end{align}
Now we apply Theorem
\ref{sog} with
$\psi(t)=f(t,\xi_2)=a(t,\xi_2)t^k$.
We see that, if $\delta$ is
small enough, the hypothesis
in Theorem \ref{sog} is
satisfied uniformly with
respect to the parameter
$\xi_2$. Hence we deduce
that, if $\delta$ is small
enough, the norm in $L^{p'}$
with respect to $x_1,x_3$ of
the integral on the
right-hand side of \eqref{a7}
is not greater than
\[
C\|u(\cdot,\xi_2)\phi(\cdot,\xi_2)
\|_{L^{q'}},
\]
uniformly with respect to
$\xi_2$. It follows that
\[
\|(ud\sigma)^\vee\|_{L^{p'}(\R^3)}\lesssim
\|u\|_{L^p_{\xi_2}
L^{q'}_{\xi_1}},
\]
which gives the desired
estimate, since
$L^{q'}(-\delta,\delta)\hookrightarrow
L^p(-\delta,\delta)$ because
$q'>p$.
\end{proof}
\begin{oss} \rm By combining the normal form
\eqref{0a0} for the function $f$ which
defines $S$ near $P_0$ with Knapp-type
scaling arguments (see e.g.
\cite{tomas2,wolff2}) one can see that
the condition $p'\geq (k+1)q$ is always
necessary for \eqref{pm} to hold under
the hypotheses of Theorem \ref{t2}.\\
Indeed, \eqref{pm} implies that
\begin{multline*}
\|\int e^{2\pi i
(x_1\xi_1+x_2\xi_2+x_3f(\xi_1,\xi_2))}
u(\lambda\xi_1,\lambda^\epsilon\xi_2)
\phi(\xi_1,\xi_2)\,d\xi_1\,
d\xi_2\|_{L^{p'}(\R^3)}\\
\lesssim
\|u(\lambda\cdot,\lambda^\epsilon\cdot)
\|_{L^{q'}(\R^2)}
\end{multline*}
where $u\not\equiv0$ is a
fixed test function,
$\lambda$ is a large
parameter and $\epsilon>0$.
As above,
$\phi(\xi_1,\xi_2)=(1+|\nabla
f(\xi_1,\xi_2)|^2)^{1/2}$.\\
Changing variables gives
\begin{multline}\label{lall}
\|\int e^{2\pi i
(x_1\xi_1+x_2\xi_2+\lambda^k
x_3f(\xi_1/\lambda,\xi_2/\lambda^\epsilon))}
u(\xi_1,\xi_2)
\phi(\xi_1/\lambda,\xi_2/\lambda^\epsilon)\,d\xi_1\,
d\xi_2\|_{L^{p'}(\R^3)}\\
\lesssim
\lambda^{\frac{1+\epsilon}{q}-
\frac{1+k+\epsilon}{p'}}\|u
\|_{L^{q'}(\R^2)}
\end{multline}
 If one
assume, by contradiction,
 $p'<(k+1)q$, then there is $\epsilon>0$
  such that the exponent of $\lambda$
  on the right-hand side of \eqref{lall} is negative.
Hence, since $\lambda^k
f(\xi_1/\lambda,\xi_2/\lambda^\epsilon)\to
a(0,0)\xi_1^k$ as
$\lambda\to\infty$,
  by applying dominated convergence
   to the integral in $\xi$
   and the
   Fatou lemma to the integral in $x$ we
   obtain
\[
\|\int e^{2\pi i
(x_1\xi_1+x_2\xi_2+
x_3a(0,0)\xi_1^k)}
u(\xi_1,\xi_2)d\xi_1\,d\xi_2\|_{L^{p'}(\R^3)}=0,
\]
which is a contradiction
because of the uniqueness of
the Fourier
transform.\end{oss}

\end{document}